\newtheorem{theorem}{Theorem}[section]
\newtheorem{lemma}[theorem]{Lemma}
\theoremstyle{remark}
\newtheorem{remark}[theorem]{Remark}
\numberwithin{equation}{section}
\newcommand{\blank}{\,\cdot\,}
\DeclareMathOperator{\ran}{\mathrm{ran}}
\newcommand{\R}{\mathbb R}
\providecommand{\C}{\mathbb{C}} % lualatex defines \C, hence this nonsense
\renewcommand{\C}{\mathbb{C}}
\newcommand{\ina}{\textup{~in~}}
\newcommand{\ona}{\textup{~on~}}
\newcommand{\fora}{\textup{~for~}}
\newcommand{\ata}{\textup{~at~}}
\newcommand{\maps}{\colon}
\newcommand{\by}{\times}
\newcommand{\abs}[2][]{#1\lvert #2 #1\rvert} % recommended by ams docs
\newcommand{\norm}[2][]{#1\lVert #2 #1\rVert}
\newcommand{\grad}{\nabla}
\DeclareMathOperator{\img}{\mathrm{Im}}
\DeclareMathOperator{\re}{\mathrm{Re}}
\renewcommand{\Im}{\img}
\renewcommand{\Re}{\re}
\newcommand{\D}{{\mathbb D}}
\newcommand{\F}{\mathscr F}
\newcommand{\X}{\mathscr X}
\newcommand{\Y}{\mathscr Y}
\newcommand{\lin}{\mathscr L}
\newcommand{\wronsk}{\mathcal W}
\title[Non-symmetric solutions to an overdetermined problem]
{Non-symmetric solutions to an overdetermined problem for the Helmholtz equation in the plane}
\author{Miles H.~Wheeler}
\address{Department of Mathematical Sciences, University of Bath, Bath BA2 7AY, United Kingdom}
\email{mw2319@bath.ac.uk}
\date{August 30, 2025}
\begin{document}
\begin{abstract}
  In this note we construct smooth bounded domains $\Omega \subset
  \R^2$, other than disks, for which the overdetermined problem
  \begin{align*}
    \left\{
      \begin{alignedat}{2}
        \Delta u + \lambda u &= 0 &\qquad& \ina \Omega,\\
        u &= b &\qquad& \ona \partial \Omega,\\
        \frac{\partial u}{\partial n} &= c &\qquad& \ona \partial \Omega
      \end{alignedat}
    \right.
  \end{align*}
  has a solution for some constants $\lambda,b,c \ne 0$. These appear
  to be the first counterexamples to a conjecture of Willms and
  Gladwell \cite{wg:saddle}.
  %
  % One can alternatively interpret $u-b$ as a sign-changing solution to
  % an overdetermined semilinear problem with a homogeneous Dirichlet
  % boundary condition.
  %
  % For each integer $m \ge 4$, we find a curve of solutions with
  % $m$-fold symmetry bifurcating from the unit disk.
\end{abstract}
\maketitle

\section{Introduction}

In this introduction we consider overdetermined elliptic problems of
the form
\begin{subequations}\label{eqn:og}
  \begin{alignat}{2}
    \label{eqn:og:helm}
    \Delta u + \lambda u &= 0 &\qquad& \ina \Omega,\\
    \label{eqn:og:kin}
    u &= b  &\qquad& \ona \partial \Omega,\\
    \label{eqn:og:dyn}
    \frac{\partial u}{\partial n} &= c &\qquad& \ona \partial \Omega,
  \end{alignat}
\end{subequations}
where $\lambda,b,c$ are constants, $\Omega \subset \R^n$ is a smooth bounded domain, and
the function $u$ is non-constant. 
There are explicit radially symmetric solutions to such problems when
$\Omega$ is a ball, and it is natural to ask whether these are the only
solutions. 

The conjecture that balls are the only solutions of \eqref{eqn:og}
when $c=0$, attributed to Schiffer \cite[Problem~80]{yau:schiffer}, is
a famous and long-standing open problem in spectral geometry. Its
interest is due in part to its connection to the Pompeiu problem; see
\cite{bst:pompeiu,williams:pompeiu,berenstein:conjecture} and the
survey \cite{zalcman:survey}.
While the Schiffer conjecture remains open, there are many partial
results showing that it holds under additional assumptions,
including~\cite{berenstein:conjecture, by:inverse, aviles:symmetry,
deng:schiffer, kl:schiffer, mondal:schiffer}. In particular,
Agranovsky \cite{agranovsky:stability} (in two dimensions) and
Kobayashi \cite{kobayashi:perturbation} (in any dimension)
independently showed that the conjecture holds for small perturbations
of balls, and so there is no hope of finding counterexamples using
local bifurcation arguments. Also see \cite{canuto:stability} for a
related result which only requires \eqref{eqn:og:dyn} to hold in an
average sense.
% could even \cite{as:isolated} for an alternative argument.
%
In terms of negative results, Shklover~\cite{shklover:schiffer} showed
that a generalization of Schiffer's conjecture to Riemannian manifolds
is false, and recently Fall, Minlend, and Weth~\cite{fmw:schiffer}
found counterexamples on the sphere $\mathbb S^2$ -- disproving a
conjecture of Souam~\cite{souam:schiffer} -- as well as a family of
non-trivial solutions bifurcating from cylinders when $\Omega$ is
allowed to be unbounded. Even more recently, Enciso, Fernández, Ruiz,
and Sicbaldi~\cite{efrs:annuli} have constructed solutions bifurcating
from two-dimensional annuli with different values of the constant $b$
on each boundary component.

The analogue of Schiffer's conjecture for $b=0$ -- so that $\lambda$
is a Dirichlet rather than Neumann eigenvalue -- was posed by
Berenstein \cite{berenstein:conjecture} and is also still open.
Although less well-studied than the Neumann version, there are still
some partial results. For instance, the works
\cite{berenstein:conjecture, by:inverse} mentioned above contain
related results on both conjectures. As in the Neumann case, there are
also counterexamples to generalized versions of the conjecture.
Shklover~\cite{shklover:schiffer} constructed examples on certain
Riemannian manifolds, and Sicbaldi found positive solutions involving
unbounded domains bifurcating from cylinders \cite{sicbaldi:extremal};
also see \cite{ss:cylinder}. More recently,
Minlend~\cite{minlend:periodic} and Dai and
Zhang~\cite{dz:signchanging} have found sign-changing solutions
involving unbounded domains.

Several authors mention an analogue of Schiffer's conjecture without
any restrictions on the constants $b,c$, and to the best of our
knowledge this question has also remained open. The earliest statement
of the conjecture appears to be by Willms and Gladwell
\cite{wg:saddle}, who proved that it holds under the assumption that
$u$ has no saddle points. In a subsequent paper with
Chamberland~\cite{wcg:duality} they also discussed `dual' formulations
of the problem. Williams~\cite{williams:helmholtz} stated a more
precise version of the conjecture and studied (among other things) the
analyticity of solutions, while Souam \cite{souam:schiffer} proved an
analogue of the conjecture on the sphere $\mathbb S^2$ when $\lambda =
2$. Dalmasso~\cite{dalmasso:overdetermined} showed that, in two
dimensions and under some assumptions on the domain $\Omega$, for any
fixed $c \ne 0$ there can be at most finitely many pairs $(\lambda,b)$
for which \eqref{eqn:og} has a solution. In a later paper, Dalmasso
\cite{dalmasso:helmholtz} proved that the conjecture holds if either
$\lambda$ is at most the first Dirichlet eigenvalue of the Laplacian,
or if $\Omega$ is convex and symmetric about a hyperplane and $\lambda$
is at most the second Dirichlet eigenvalue. 
While \eqref{eqn:og} with both $b,c \ne 0$ is no longer an
overdetermined eigenvalue problem for the Laplacian, after making the
simple transformation $u \mapsto u+b$ it can be thought of as an
overdetermined semilinear eigenvalue problem
\begin{align}\label{eqn:serrin}
    \Delta u + \lambda f(u) = 0 \;\ina \Omega,
    \qquad 
    u = 0,\,  \frac{\partial u}{\partial n} = c \;\ona \partial \Omega,
\end{align}
with the specific nonlinearity $f(u)=u+b$. This form of the problem
was studied by Canuto and Rial~\cite{cr:overdetermined}, who showed
that the unit ball is an isolated solution provided $\lambda > 0$ lies
outside a certain countable set. Canuto~\cite{canuto:symmetry} later
proved a related result with a less restrictive hypothesis on
$\lambda$ but, at least in the context of \eqref{eqn:serrin}, a more
restrictive hypothesis on $c$. 

By Serrin's result \cite{serrin:symmetry}, semilinear problems of the
form \eqref{eqn:serrin} can have a solution $u$ with a strict sign
only when $\Omega$ is a ball. The existence of sign-changing solutions
for bounded domains $\Omega$ other than balls, however, has remained
open until quite recently. The only result we are aware of is due to
Ruiz~\cite{ruiz:signchanging}, who constructed sign-changing solutions
to \eqref{eqn:serrin} using nonlinearities of the form $f(u) =
u-(u^+)^3$ in dimensions $2$, $3$, and $4$. For sign-changing
solutions in unbounded domains, see the references
\cite{minlend:periodic, dz:signchanging} mentioned earlier. Lastly, we
mention the construction by Kamburov and Sciaraffia~\cite{ks:annular}
of solutions in annular domains with nonlinearity $f(u)=1$, where (as
in \cite{efrs:annuli}) they allow the constants in the boundary
conditions to differ on the two boundary components.

In this note, we use local bifurcation techniques to construct
families of solutions to \eqref{eqn:og} close to the unit disk
in two dimensions. The leading-order expansions for these solutions
imply that they are not disks, and so the above Willms--Gladwell
conjecture is false, at least in two dimensions. As a construction of
sign-changing solutions to an overdetermined semilinear problem
\eqref{eqn:serrin}, our proof is much simpler than that of
Ruiz~\cite{ruiz:signchanging}, which is to be expected given our much
simpler nonlinearity and our restriction to two dimensions. Also,
unlike in \cite{ruiz:signchanging} our functions $u$ are real-analytic.

\subsection{Statement of the main result}
Before stating our result more precisely, we need the following lemma about
Wronskians of Bessel functions. Here and in what follows we use the standard
notation $J_\nu$ for the Bessel function of the first kind with order
$\nu$.
\begin{lemma}\label{lem:bessel}
  For any integer $m \ge 4$, the Wronskian $\wronsk_{1,m} \colonequals J_1J_m' -
  J_m J_1'$ has a smallest positive root $\mu_m > 0$. Moreover, this
  root $\mu_m$ is simple, strictly decreases as a function 
  of $m$, and satisfies the inequalities
  $j_{1,1} < \mu_m < j_{0,2}$, where here $j_{1,1} \approx 3.8317$ is the first
  positive root of $J_1$ and $j_{0,2} \approx 5.5201$ is the second positive
  root of $J_0$.
\end{lemma}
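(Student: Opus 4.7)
My plan is to exploit an integral representation for $\wronsk_{1,m}$ that follows from Bessel's equation. Multiplying the equation $J_1'' + J_1'/x + (1 - 1/x^2)J_1 = 0$ by $J_m$, the analogous equation for $J_m$ by $J_1$, and subtracting yields
\begin{equation*}
  (x\,\wronsk_{1,m}(x))' = \frac{m^2-1}{x}\, J_1(x) J_m(x);
\end{equation*}
since $x\wronsk_{1,m}(x) = O(x^{m+1})$ near the origin, integrating gives $x\wronsk_{1,m}(x) = (m^2-1) \int_0^x J_1(t) J_m(t)/t\,dt$. On $(0, j_{1,1})$ both $J_1(t)$ and $J_m(t)$ are strictly positive (using $j_{m,1} > j_{1,1}$), so the integrand, and hence $\wronsk_{1,m}$, is strictly positive; at $x = j_{1,1}$ the formula $\wronsk_{1,m}(j_{1,1}) = -J_m(j_{1,1}) J_1'(j_{1,1})$ is also positive since $J_1'(j_{1,1}) < 0$. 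Hence any positive root of $\wronsk_{1,m}$ must exceed $j_{1,1}$.

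To push this root below $j_{0,2}$, I would show $\wronsk_{1,m}(j_{0,2}) < 0$ for $m \ge 4$. The crucial simplification comes from $J_0(j_{0,2}) = 0$: combining $J_1' = J_0 - J_1/x$ with $J_m' = J_{m-1} - m J_m/x$ collapses the Wronskian to
\begin{equation*}
  \wronsk_{1,m}(j_{0,2}) = \frac{J_1(j_{0,2})}{j_{0,2}}\bigl[j_{0,2}\, J_{m-1}(j_{0,2}) - (m-1) J_m(j_{0,2})\bigr].
\end{equation*}
Since $J_1(j_{0,2}) < 0$ and, for $m \ge 4$, both $J_{m-1}(j_{0,2})$ and $J_m(j_{0,2})$ are positive (because $j_{0,2} < j_{3,1} \le j_{m-1,1}$), the sign of $\wronsk_{1,m}(j_{0,2})$ reduces to the Bessel-ratio inequality $J_m(j_{0,2})/J_{m-1}(j_{0,2}) < j_{0,2}/(m-1)$. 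Granting this, the intermediate value theorem produces a smallest positive root $\mu_m \in (j_{1,1}, j_{0,2})$. Simplicity is automatic: differentiating the key identity gives $\wronsk_{1,m}'(\mu_m) = (m^2-1) J_1(\mu_m) J_m(\mu_m)/\mu_m^2$, which is nonzero because $\mu_m \in (j_{1,1}, j_{1,2}) \cap (0, j_{m,1})$ avoids the zeros of both $J_1$ and $J_m$.

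For the strict decrease $\mu_{m+1} < \mu_m$, I would show $\wronsk_{1,m+1}(\mu_m) < 0$; combined with the positivity of $\wronsk_{1,m+1}$ on $(0, j_{1,1}]$ this forces the smallest positive root $\mu_{m+1}$ to lie below $\mu_m$. The Bessel recurrence $J_{m+1} = (2m/x) J_m - J_{m-1}$ and its derivative yield
\begin{equation*}
  \wronsk_{1,m+1}(x) = \frac{2m}{x}\,\wronsk_{1,m}(x) - \wronsk_{1,m-1}(x) - \frac{2m}{x^2}\, J_1(x) J_m(x),
\end{equation*}
so at $x = \mu_m$ this becomes $\wronsk_{1,m+1}(\mu_m) = -\wronsk_{1,m-1}(\mu_m) - (2m/\mu_m^2) J_1(\mu_m) J_m(\mu_m)$; the second term is a positive contribution (product of a negative and a positive quantity, with overall minus sign), and the first has to be controlled inductively in $m$. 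The main obstacle throughout is the Bessel-ratio inequality $J_m(j_{0,2})/J_{m-1}(j_{0,2}) < j_{0,2}/(m-1)$ for $m \ge 4$. Asymptotically it is very loose (the ratio behaves like $j_{0,2}/(2m)$ for large $m$), yet standard continued-fraction arguments go the wrong way and give the reverse bound $J_m(x)/J_{m-1}(x) > x/(2m)$. The cleanest route I see is to use the three-term recurrence to write $J_k(j_{0,2})$ as an explicit rational multiple of $J_1(j_{0,2})$ in low cases, verify the base case $m = 4$ as a polynomial inequality in $j_{0,2}^2$, and then propagate to $m \ge 5$ inductively via the same recurrence.
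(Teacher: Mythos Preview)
Your proof shares the paper's starting point---the identity $(x\wronsk_{1,m})' = (m^2-1)J_1J_m/x$---and your arguments for positivity on $(0,j_{1,1}]$ and for simplicity are essentially the same as the paper's. The two approaches diverge in how they handle the upper bound $\mu_m<j_{0,2}$ and the monotonicity in $m$, and in both places the paper's route is substantially cleaner while yours has real gaps.

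\textbf{Upper bound.} You try to show $\wronsk_{1,m}(j_{0,2})<0$ for \emph{every} $m\ge 4$, which you correctly reduce to a Bessel-ratio inequality that you then admit you cannot prove directly. The paper avoids this entirely: it first uses the trivially checked sign change $\wronsk_{1,m}(j_{1,1})>0>\wronsk_{1,m}(j_{1,2})$ (immediate from $J_1(j_{1,2})=0$ and $J_1'(j_{1,2})>0$) to place $\mu_m\in(j_{1,1},j_{1,2})$, then proves monotonicity, and only \emph{afterwards} addresses the sharper bound $\mu_m<j_{0,2}$ by reducing it to the single case $m=4$, which is a one-line numerical check of $\wronsk_{1,4}(j_{0,2})\approx -0.012<0$. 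No ratio inequality and no induction are needed.

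\textbf{Monotonicity.} Your recurrence gives
\[
  \wronsk_{1,m+1}(\mu_m) = -\wronsk_{1,m-1}(\mu_m) - \tfrac{2m}{\mu_m^2}J_1(\mu_m)J_m(\mu_m),
\]
and as you note the second term is \emph{positive}; so to get $\wronsk_{1,m+1}(\mu_m)<0$ you would need $\wronsk_{1,m-1}(\mu_m)$ to be positive and to dominate. You do not establish either fact, and ``controlled inductively'' hides the circularity: knowing $\wronsk_{1,m-1}(\mu_m)>0$ amounts to $\mu_{m-1}>\mu_m$, which is what you are trying to prove. The paper instead uses the algebraic identity $J_1\wronsk_{k,m}-J_k\wronsk_{1,m}+J_m\wronsk_{1,k}=0$: at $\mu_m$ this gives $\wronsk_{1,k}(\mu_m)=-(J_1/J_m)\wronsk_{k,m}(\mu_m)$, and the same differential identity applied to $\wronsk_{k,m}$ (with $k>m$, so $J_k,J_m>0$ on $(0,j_{1,2})$) shows $\wronsk_{k,m}(\mu_m)<0$, hence $\wronsk_{1,k}(\mu_m)<0$ and $\mu_k<\mu_m$ in one stroke.
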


\begin{theorem}\label{thm:main}
  Fix $n=2$, $b=1$, and $\alpha \in (0,1)$.
  For any integer $m \ge 4$, there exists $\varepsilon_0 > 0$
  and a curve of classical solutions to \eqref{eqn:og}, parametrized by
  $\varepsilon \in (-\varepsilon_0,\varepsilon_0)$, with the following properties.
  \begin{enumerate}[label=\rm(\roman*)]
  \item \label{thm:main:notdisk} For $\varepsilon \ne 0$, the domain $\Omega(\varepsilon)$ is not a disk.
  \item \label{thm:main:mfold} The solutions are $m$-fold symmetric, in the sense that
    $\Omega(\varepsilon),u(\varepsilon)$ are invariant under rotations by an angle $2\pi/m$ as
    well as reflections across the horizontal axis.
  \item The domains $\Omega(\varepsilon)$ are described by conformal mappings $\phi(\varepsilon)
    \maps \D \to \Omega(\varepsilon)$, where $\D$ is the unit disk, and the unknowns 
    \begin{align}
      \label{eqn:spaces}
      (u \circ \phi,\phi,c,\lambda) \in C^{2+\alpha}(\overline\D) \by C^{2+\alpha}(\overline\D,\C) \by \R^2
    \end{align}
    depend real-analytically on $\varepsilon$. 
  \item \label{thm:main:asym} As $\varepsilon \to 0$ we have the asymptotic expansions
    \begin{subequations}\label{eqn:asym}
      \begin{align}
        \label{eqn:asym:F}
        \phi(re^{i\theta};\varepsilon) 
        &= re^{i\theta}  + \varepsilon (re^{i\theta})^{m+1} + O(\varepsilon^2),\\
        \label{eqn:asym:u}
        (u \circ \phi)(re^{i\theta};\varepsilon) &= 
        \frac{J_0(\mu_m r)}{J_0(\mu_m)}
        +
        \varepsilon\mu_m
        \bigg(
        \frac{J_1(\mu_m)J_m(\mu_m r)}{J_0(\mu_m)J_m(\mu_m)}
        -\frac{ J_1(\mu_mr)}{J_0(\mu_m)}r^{m+1}
        \bigg)
        \cos m\theta
        + O(\varepsilon^2),\\
        \label{eqn:asym:c}
        c(\varepsilon) &= -\mu_m\frac{J_1(\mu_m)}{J_0(\mu_m)} + O(\varepsilon^2),\\
        \label{eqn:asym:lam}
        \lambda(\varepsilon) &= \mu_m^2 + O(\varepsilon^2)
      \end{align}
    \end{subequations}
    in the spaces \eqref{eqn:spaces}. 
  \end{enumerate}
\end{theorem}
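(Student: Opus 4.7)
The plan is to apply an analytic Crandall-Rabinowitz bifurcation theorem after reformulating the problem on the fixed unit disk. First I would represent $\Omega$ as the image of a conformal map $\phi\maps\D\to\Omega$ normalized by $\phi(0)=0$ and $\phi'(0)=1$, and set $v \colonequals u\circ\phi$. Then \eqref{eqn:og} becomes
\begin{equation*}
  \Delta v + \lambda\abs{\phi'}^2 v = 0 \ina \D,\qquad v = 1 \ona \partial\D,\qquad \partial_r v = c\abs{\phi'}\ona \partial\D,
\end{equation*}
which I would encode as a real-analytic operator equation $\F(v,\phi,c;\lambda) = 0$ between Hölder spaces matching \eqref{eqn:spaces}, restricted to the closed subspace of pairs $(v,\phi)$ invariant under $\theta\mapsto\theta+2\pi/m$ and under conjugation $z\mapsto\bar z$. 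This restriction both enforces \ref{thm:main:mfold} and kills the residual rotational gauge. The trivial branch is $\phi(z)\equiv z$, $v(r)=J_0(\sqrt\lambda r)/J_0(\sqrt\lambda)$, $c(\lambda)=-\sqrt\lambda J_1(\sqrt\lambda)/J_0(\sqrt\lambda)$.

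Next I would compute $D_{(v,\phi,c)}\F$ at a trivial solution with $\lambda=\mu_m^2$. The linearized PDE reads $\Delta\dot v + \mu_m^2\dot v = -2\mu_m^2\Re(\dot\phi')\,v_0$ with $\dot v=0$ and $\partial_r\dot v = \dot c + c_0\Re(\dot\phi')$ on $\partial\D$. Writing $\dot\phi(z)=\sum_{k\ge 1}a_k z^{km+1}$ (the $z$-term is killed by $\phi'(0)=1$) and $\dot v=\sum_{k\ge 0}w_k(r)\cos(km\theta)$, Fourier decoupling yields $\dot c=0$ and $w_0\equiv 0$ in the zeroth mode, where $J_0(\mu_m)\ne 0$ thanks to $\mu_m<j_{0,2}$. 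For each $k\ge 1$, the pair $(w_k,a_k)$ parametrizes a two-parameter family of regular-at-origin solutions of an inhomogeneous Bessel ODE subject to two linear boundary conditions; an explicit computation shows the solvability/kernel is governed by the vanishing of $\wronsk_{1,km}(\mu_m)$. By Lemma~\ref{lem:bessel} this happens precisely when $k=1$, giving a one-dimensional kernel spanned by a profile of the form $J_1(\mu_m)J_m(\mu_m r)/J_m(\mu_m)-J_1(\mu_m r)\,r^{m+1}$ multiplied by $\cos m\theta$ -- exactly the shape appearing in \eqref{eqn:asym:F}--\eqref{eqn:asym:u}. The same mode-by-mode analysis shows $D\F$ is Fredholm of index zero.

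To invoke Crandall-Rabinowitz I would then verify the transversality condition, which upon differentiating the solvability constraint in $\lambda$ reduces to $\partial_\mu\wronsk_{1,m}(\mu_m)\ne 0$ -- precisely the simple-root statement in Lemma~\ref{lem:bessel}. The analytic bifurcation theorem (e.g.\ in the Buffoni-Toland formulation) then produces a real-analytic curve $\varepsilon\mapsto(v,\phi,c,\lambda)(\varepsilon)$ in \eqref{eqn:spaces} satisfying the expansions \eqref{eqn:asym}. Property \ref{thm:main:notdisk} follows because an $m$-fold symmetric disk with $m\ge 4$ must be centered at the origin, and our normalization then forces it to be $\D$ itself, which contradicts the nonzero $\varepsilon z^{m+1}$ term in \eqref{eqn:asym:F}.

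The main obstacle, as foreshadowed above, is the verification that $\wronsk_{1,km}(\mu_m)\ne 0$ for all $k\ge 2$. The monotonicity in Lemma~\ref{lem:bessel} immediately gives $\mu_m\ne\mu_{km}$, ruling out coincidence with the smallest positive root of $\wronsk_{1,km}$, but one must also exclude larger roots; this presumably uses the explicit bound $\mu_m<j_{0,2}$ together with sign/oscillation properties of $J_{km}$ on $[0,1]$, which is plausibly why those bounds are recorded in the lemma.
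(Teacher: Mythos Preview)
Your strategy is essentially the paper's: conformal pullback to $\D$, restriction to the $m$-fold symmetric subspace with the normalization $\phi'(0)=1$, and analytic Crandall--Rabinowitz at $\lambda=\mu_m^2$, with both the one-dimensional kernel and transversality reducing to the Wronskian $\wronsk_{1,m}$ from Lemma~\ref{lem:bessel}. The only substantive difference is that the paper introduces the shifted unknown $\tilde u = U + v + \nabla U\cdot w$ (with $U$ the trivial radial profile), which kills the inhomogeneous term $-2\mu_m^2\Re(\dot\phi')v_0$ in your linearized Helmholtz equation and decouples $w$ from everything except the Dirichlet condition. This turns your inhomogeneous Bessel ODEs into homogeneous ones and makes the Fredholm and kernel computations shorter, but your coupled formulation works too.

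On your acknowledged obstacle: the worry about ``larger roots'' of $\wronsk_{1,km}$ disappears once you use the Abel-type identity $(\mu\wronsk_{1,k})'=\tfrac{k^2-1}{\mu}J_1J_k$. For every $k\ge 4$ one has $J_1<0$ and $J_k>0$ on $(j_{1,1},j_{1,2})$ (the latter because $j_{k,1}\ge j_{4,1}>j_{1,2}$), so $\mu\wronsk_{1,k}$ is strictly decreasing there and has exactly one root in that interval, namely $\mu_k$. Since Lemma~\ref{lem:bessel} gives $\mu_{km}<\mu_m<j_{0,2}<j_{1,2}$, you get $\wronsk_{1,km}(\mu_m)<0$ for all $k\ge 2$ directly; the bound $\mu_m<j_{0,2}$ is actually used elsewhere (to ensure $J_0(\mu_m)\ne 0$), while the relevant bound here is the weaker $\mu_m<j_{1,2}$.
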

\begin{figure}
  \includegraphics[scale=1.1]{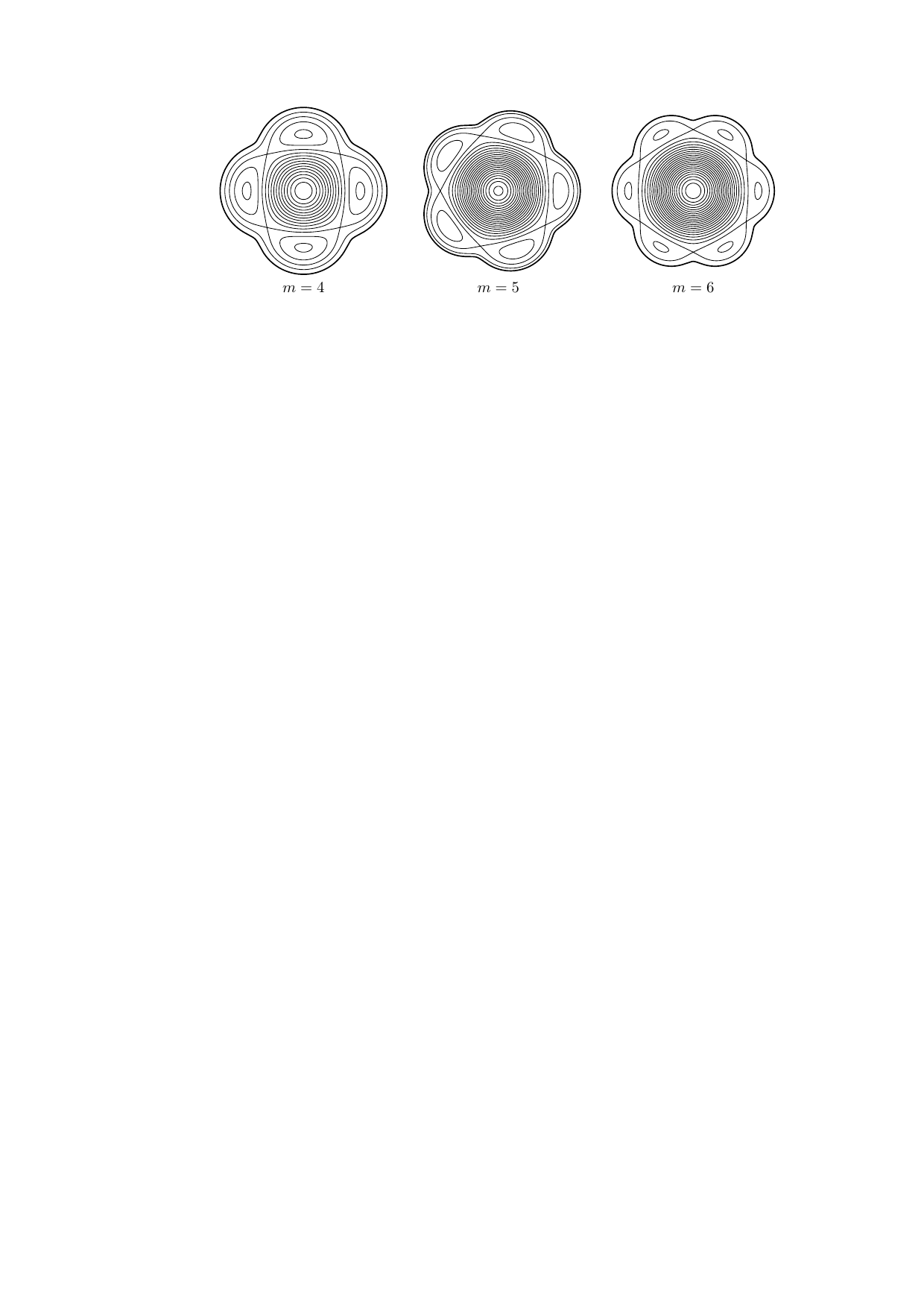}
  \caption{Exaggerated sketches of the domains $\Omega(\varepsilon)$ and functions
  $u(\varepsilon)$ from Theorem~\ref{thm:main} for $m=4,5,6$, based on the
  leading-order approximation \eqref{eqn:asym}. \label{fig:mfold}}
\end{figure}
\begin{remark}\label{rk:regularity}
  For each fixed $\varepsilon$, the domains $\Omega(\varepsilon)$ 
  are in fact real-analytic. As observed by
  Williams~\cite{williams:helmholtz}, this follows from
  \cite[Theorem~2]{kn:regularity}. The functions
  $u(\blank;\varepsilon)$ are therefore also real-analytic up to the
  boundary.
\end{remark}
\begin{remark}
  Possibly after shrinking $\varepsilon_0 > 0$,
  the first conclusion \ref{thm:main:notdisk} of
  Theorem~\ref{thm:main} is an immediate consequence of
  \eqref{eqn:asym:F}. The
  asymptotic formulas \eqref{eqn:asym} are illustrated in
  Figure~\ref{fig:mfold} for $m=4,5,6$, using an unreasonably large
  value of $\varepsilon > 0$ so that the saddles and local extrema of $u$ are
  more apparent. Note that the existence of at least one saddle point
  is guaranteed by \cite[Theorem~1]{wg:saddle}. While the arrangement
  of critical points in the figure is qualitatively correct, the
  domains $\Omega(\varepsilon)$ of the exact solutions with small $\varepsilon$ will
  necessarily be convex.
\end{remark}
\begin{remark}\label{rk:scaling}
  If $(\Omega,u)$ solves \eqref{eqn:og} with parameters
  $(\lambda,b,c)$, then, for any constants $A > 0$ and $B \in \R$, so does $(A^{-1} \Omega, B
  u(A\blank))$ with parameters $(A^2\lambda,B b,A B c)$. Thus, provided $b \ne
  0$, we are free to set $b=1$ in Theorem~\ref{thm:main} without loss
  of generality. Similarly, we are free to fix one of the other
  parameters $\lambda,c$ (provided they are nonzero) or alternatively to fix
  the scale of $\Omega$. We choose the latter, normalizing the conformal
  mapping to satisfy $\phi'(0)=1$; see \eqref{eqn:killscaling} below. 
\end{remark}
\begin{remark}\label{rk:bessel}
  As we will see in Section~\ref{sec:bessel}, the bounds on $\mu_m$ in
  Lemma~\ref{lem:bessel} imply that $J_0(\mu_m)$ and $J_1(\mu_m)$ are
  negative while $J_m(\mu_m)$ is positive. Thus the denominators
  appearing in \eqref{eqn:asym} are nonzero, the $O(\varepsilon)$ terms in
  \eqref{eqn:asym:F} do not vanish identically, and \eqref{eqn:asym:c}
  implies $c(\varepsilon) < 0$ for $\abs \varepsilon$ sufficiently small. One can also
  check that, as must be the case, $\mu_m$ lies in the discrete set $\Lambda$
  defined in \cite[Lemma~3.7 and Definition~3.9]{cr:overdetermined}
  where their local uniqueness proof fails.
\end{remark}

\subsection{Outline of the paper} 
In Section~\ref{sec:reform}, we transform \eqref{eqn:og} into an
abstract operator equation $\F(v,w,\gamma;\mu)=0$ where $\F \maps \X
\by I \to \Y$ is a real-analytic mapping between Banach
spaces. There are many ways to effect such a transformation; we use
conformal mappings combined with a change of dependent variables which
partially decouples the linearized equations. In
Section~\ref{sec:proof}, we prove Theorem~\ref{thm:main}, assuming
that Lemma~\ref{lem:bessel} holds, by applying a real-analytic version
of the standard Crandall--Rabinowitz theorem \cite{rabinowitz:simple}.
In part because of the careful choice of variables in
Section~\ref{sec:reform}, the required analysis of the linearized
operators $\lin(\mu)$ quickly boils down to questions about the
Wronskian $\wronsk_{1,m}$ appearing in Lemma~\ref{lem:bessel}, which
we then finally prove in Section~\ref{sec:bessel} using ideas from
\cite{palmai:interlacing}.

\section{Reformulation}\label{sec:reform}
As is usual in bifurcation analyses of free boundary problems, we
begin by reducing \eqref{eqn:og} to a problem in a fixed domain by
introducing an appropriate diffeomorphism which becomes a new unknown.
Since we are in two dimensions, conformal mappings are a natural
option which simplify some of the calculations, but this choice is not
essential. To streamline the linear analysis, we then make a further
change of dependent variables which mixes the unknown $u$ and this
conformal mapping.

\subsection{Preliminaries}
To avoid dealing with square roots in some of the calculations below,
we replace the boundary condition \eqref{eqn:og:dyn} with half of its
square,
\begin{align}
  \label{eqn:og:dyn2}
  \frac 12 \left(\frac{\partial u}{\partial n}\right)^2 = \frac 12 c^2 \qquad \ona \partial \Omega.
\end{align}
This is of course equivalent provided we choose the sign of $c$
appropriately. For similar reasons, we switch from the parameter $\lambda > 0$
to
\begin{align}
  \label{eqn:mudef}
  \mu \colonequals \sqrt \lambda.
\end{align}
We will assume throughout that $\mu$ lies 
in the interval 
\begin{align}
  \label{eqn:I}
  I \colonequals (j_{1,1},j_{0,2}),
\end{align}
where, as in Lemma~\ref{lem:bessel}, $j_{1,1} \approx 3.8317$ is the first
positive root of $J_1$ and $j_{0,2} \approx 5.5201$ is the second positive
root of $J_0$. 

\subsection{Conformal change of variables}

\begin{subequations}\label{eqn:conf}
  Fixing once and for all a Hölder parameter $\alpha \in (0,1)$, suppose that
  $w \in C^{2+\alpha}(\overline\D,\R^2)$ is a holomorphic vector field on the
  unit disk $\D \subset \R^2 \cong \C$, i.e.~its components $(w_1,w_2)$ satisfy the
  Cauchy--Riemann equations
  \begin{align}
    \label{eqn:conf:holo1}
    \partial_1 w_1 - \partial_2 w_2 &= 0,\\
    \label{eqn:conf:holo2}
    \partial_2 w_1 + \partial_1 w_2 &= 0
  \end{align}
  in $\D$. Provided $\norm w_{C^{2+\alpha}(\D)}$ is sufficiently small, the
  near-identity map $\phi = \mathrm{id} + w$ is a diffeomorphism onto its
  image, which is a $C^{2+\alpha}$ domain $\Omega$. Letting $\tilde u = u \circ \phi \in
  C^{2+\alpha}(\overline\D)$, it is straightforward to check that the
  problem \eqref{eqn:og:helm}, \eqref{eqn:og:kin}, \eqref{eqn:og:dyn2}
  with $b=1$ and $\lambda=\mu^2$ is then equivalent to
  \begin{alignat}{2}
    \label{eqn:conf:bulk}
    \Delta \tilde u +  \mu^2\det(I+Dw)\, \tilde u &= 0 &\qquad& \ina \D, \\
    \label{eqn:conf:kin}
    \tilde u &= 1&\qquad& \ona \partial\D, \\
    \label{eqn:conf:dyn}
    \tfrac 12 \tilde u_r^2 - \tfrac 12 c^2 \det(I+Dw) &= 0 &\qquad& \ona \partial \D,
  \end{alignat}
\end{subequations}
where a subscript $r$ is shorthand for an application of the usual
radial derivative $\partial_r = \abs x^{-1} x \cdot \grad$. For algebraic convenience
we have also multiplied both \eqref{eqn:og:helm} and
\eqref{eqn:og:dyn2} by the positive function $\det(I+Dw)$. Identifying $\R^2$ with $\C$, we
note that $\det(I+Dw) = \abs{1+w'}^2$ where $w'$ is the complex
derivative of the holomorphic function $w$.

The transformed problem \eqref{eqn:conf} has a family of ``trivial''
radially symmetric solutions, expressed in polar coordinates
$(r,\theta)$ as
\begin{align}
  \label{eqn:triv}
  \tilde u = U(r;\mu) \colonequals \frac{J_0(\mu r)}{J_0(\mu)},
  \qquad 
  w=0,
  \qquad 
  c = U_r(1;\mu) = 
  \frac{\mu J_0'(\mu)}{J_0(\mu)}
  =-\frac{\mu J_1(\mu)}{J_0(\mu)},
\end{align}
where in the last equality we have used the fact that $J_0'=-J_1$.
We recall that, for any $k \ge 0$, the Bessel function $J_k$
is, up to scaling, the unique solution of the ordinary
differential equation
\begin{align}
  \label{eqn:besselode}
  J_k'' + \frac 1\mu J_k' + \left(1-\frac{k^2}{\mu^2}\right)J_k &= 0
  \qquad \fora \mu > 0
\end{align}
which is finite at $\mu = 0$. We are interested in the solutions
\eqref{eqn:triv} only for $\mu$ lying in the interval $I$ from \eqref{eqn:I},
and on this interval both $J_0$ and $J_1$ are strictly negative.

\subsection{Simplifying the linear part}
A downside of the formulation \eqref{eqn:conf} is that the unknowns
$u,w$ are coupled in both \eqref{eqn:conf:bulk} and
\eqref{eqn:conf:dyn}. At the nonlinear level this is unavoidable, but
at the linear level the equations can be partially decoupled using a
standard trick motivated by the chain rule.

Define $v \in C^{2+\alpha}(\overline\D)$ and $\gamma \in \R$ in terms of $\tilde
u,w,c$ by
\begin{align}
  \label{eqn:vgamma}
  \begin{aligned}
    \tilde u &\equalscolon U + v + \grad U \cdot w, \\
    c &\equalscolon U_r(1;\mu) - \gamma.
  \end{aligned}
\end{align}
Inserting \eqref{eqn:vgamma} into \eqref{eqn:conf} and grouping the
linear terms in $v,w,\gamma$, there are several cancellations and we are
left with the system
\begin{subequations}\label{eqn:newform}
  \begin{alignat}{2}
    \label{eqn:newform:holo1}
    \partial_1 w_1 - \partial_2 w_2 &= 0 &\qquad& \ina \D,\\
    \label{eqn:newform:holo2}
    \partial_2 w_1 + \partial_1 w_2 &= 0&\qquad& \ina \D,\\
    \label{eqn:newform:bulk}
    \Delta v +  \mu^2v &= N_1(\grad v,Dw,\gamma,x;\mu)  &\qquad& \ina \D, \\
    \label{eqn:newform:kin}
    v + U_r\, x \cdot w &= 0 &\qquad& \ona \partial\D, \\
    \label{eqn:newform:dyn}
    U_r v_r - U_{rr} v + U_r \gamma &= N_2(\grad v,D w,\gamma,x;\mu) &\qquad& \ona \partial \D,
  \end{alignat}
\end{subequations}
where as a final step we have used \eqref{eqn:newform:kin} to
eliminate $w$ from the left hand side of \eqref{eqn:newform:dyn}. 
The nonlinear terms are given explicitly by
\begin{align*}
  N_1(\grad v, Dw,\gamma,x;\mu)
  &= -\mu^2\big(U \det Dw + (v + \grad U \cdot w) \grad \cdot w 
  +(v + \grad U \cdot w) \det Dw\big), \\
  N_2(\grad v, Dw,\gamma,x;\mu)
  &=
  \tfrac 12 (v_r^2 - \gamma^2)
  + U_r (\tfrac 12v_r+\gamma) \grad \cdot w
  + U_{rr} v_r x \cdot w
  \\&\qquad
  + \tfrac 12(\grad U \cdot w)_r^2
  - \tfrac 12 U_r^2 \det Dw
  + \gamma U_r \det Dw
  - \tfrac 12 \gamma^2 \grad \cdot w,
\end{align*}
although we emphasize that these formulas play essentially no
role in the following analysis. 

\subsection{Symmetry and functional setting}
We restrict attention to solutions of \eqref{eqn:newform} which are
$m$-fold symmetric in the sense of Theorem~\ref{thm:main}\ref{thm:main:mfold}.
Identifying $\R^2$ with $\C$, this can be conveniently
expressed as
\begin{subequations}\label{eqn:mfold}
  \begin{gather}
    \label{eqn:mfold:v}
    v(e^{2\pi i/m}z) = v(z) = v(\bar z),\\
    \label{eqn:mfold:w}
    w(e^{2\pi i/m}z) = e^{2\pi i/m} w(z),
    \qquad 
    w(\bar z) = \overline{w(z)}
  \end{gather}
\end{subequations}
for all $z \in \overline\D$. We also require $Dw(0)=0$, which by
\eqref{eqn:newform:holo1}, \eqref{eqn:newform:holo2}, and
\eqref{eqn:mfold:w} is equivalent to the single real condition
\begin{align}
  \label{eqn:killscaling}
  \partial_1 w_1(0) = 0.
\end{align}
This enforces the normalization $\phi'(0)=1$ for the conformal mapping
$\phi=\mathrm{id}+w$, which in turn fixes the scale of $\Omega$; see 
Remark~\ref{rk:scaling}.

We now introduce the Banach spaces
\begin{align*}
  \X &\colonequals \big\{ (v,w,\gamma) \in C^{2+\alpha}(\overline \D) \by C^{2+\alpha}(\overline \D, \C)
  \by \R : \text{\eqref{eqn:newform:holo1}, \eqref{eqn:newform:holo2}, \eqref{eqn:mfold}, \eqref{eqn:killscaling} hold}\big\},\\
  \Y &\colonequals \big\{ (f_1,f_2,f_3) \in C^\alpha(\overline \D) \by C^{2+\alpha}(\partial\D) \by C^{1+\alpha}(\partial\D) :
  \text{each $f_i$ has the symmetry \eqref{eqn:mfold:v}} \big\}.
\end{align*}
Defining the interval $I=(j_{1,1},j_{0,2})$ as in \eqref{eqn:I}, we
can then interpret \eqref{eqn:newform} as an operator equation
$\F(v,w,\gamma;\mu) = 0$, where $\F \maps \X \by I \to \Y$ is the real-analytic
mapping given by
\begin{align*}
  \F_1(v,w,\gamma;\mu) &\colonequals \Delta v +  \mu^2v - N_1(\grad v,Dw,\gamma,x;\mu), \\
  \F_2(v,w,\gamma;\mu) &\colonequals v + U_r\, x \cdot w, \\
  \F_3(v,w,\gamma;\mu) &\colonequals U_r v_r - U_{rr} v + U_r \gamma - N_2(\grad v,D w,\gamma,x;\mu).
\end{align*}
Since $\F$ only involves partial derivatives and compositions with
real-analytic functions, its real-analyticity is standard; see for
instance \cite{lo:composition} and \cite[proof of
Theorem~II.5.2]{valent:elasticity}. It is also not difficult to verify
that $\F$ respects the symmetries in the definitions of $\X,\Y$,
especially if one uses \eqref{eqn:vgamma} to reintroduce $\tilde u$ and
writes $\det(I+Dw)=\abs{1+w'}^2$ using complex variables.

The trivial solutions \eqref{eqn:triv} are now represented by points
$(0,\mu) \in \X \by I$, where the associated linearized operators 
\begin{align*}
  \lin(\mu) \colonequals D_{(v,w,\gamma)} \F(0,0,0;\mu) \maps \X \longrightarrow \Y 
\end{align*}
are given in components by 
\begin{align}
  \label{eqn:lin}
  \lin(\mu) 
  \begin{pmatrix}
     v\\ w \\  \gamma
  \end{pmatrix}
  =
  \begin{pmatrix}
    \Delta  v +  \mu^2  v \\
     v + U_r\, x \cdot  w \\
    U_r v_r  -  U_{rr}  v + U_r  \gamma
  \end{pmatrix}.
\end{align}
The advantage of \eqref{eqn:newform} over a more direct approach to
\eqref{eqn:conf} is that the first and third components of
\eqref{eqn:lin} have constant coefficients and do not involve $w$.

\section{Proof of the theorem}\label{sec:proof}

In this section we prove Theorem~\ref{thm:main},
assuming Lemma~\ref{lem:bessel}. The main tool is the following
real-analytic version of the celebrated Crandall--Rabinowitz
theorem~\cite{rabinowitz:simple} on bifurcation from a simple
eigenvalue. 
\begin{theorem}[Theorem 8.3.1 in \cite{bt:analytic}]\label{thm:cr}
  Let $X,Y$ be Banach spaces, let $I \subset \R$ be an open
  interval, and let $F \maps X \by I \to Y$ a real-analytic mapping with 
  $F(0,\mu) = 0$ for all $\mu \in I$. Denoting the associated
  linearized operators by $L(\mu) \colonequals D_x F(0,\mu)$, suppose that,
  for some $\mu^* \in I$,
  \begin{enumerate}[label=\rm(\alph*)]
  \item \label{cr:fred} $L(\mu^*)$ is Fredholm with index 0;
  \item \label{cr:ker} the kernel of $L(\mu^*)$ is one dimensional, spanned by $\xi^*
    \in X$; and
  \item \label{cr:trans} \textup{(Transversality)} $\partial_\mu L(\mu^*) \xi^* \notin \ran L(\mu^*)$.
  \end{enumerate}
  Then there exists $\varepsilon_0 > 0$ and a pair of analytic functions
  $(\tilde x,\tilde \mu) \maps (-\varepsilon_0,\varepsilon_0) \to X \by I$ such that
  \begin{enumerate}[label=\rm(\roman*)]
  \item $F(\tilde x(\varepsilon),\tilde \mu(\varepsilon)) = 0$ for all $\varepsilon \in (-\varepsilon_0,\varepsilon_0)$;
  \item $\tilde x(0)=0$, $\tilde \mu(0)=\mu^*$, and $\tilde x_\varepsilon(0) = \xi^*$;
    and
  \item there exists an open neighborhood $U$ of $(0,\mu^*)$ in $X \by \R$
    such that 
  \begin{align*}
    \big\{ (x,\mu) \in U : F(x,\mu)=0,\, x \ne 0 \big\}
    =
    \big\{ (\tilde x(\varepsilon),\tilde \mu(\varepsilon))  : 0 < \abs \varepsilon < \varepsilon_0 \big\}.
  \end{align*}
  \end{enumerate}
\end{theorem}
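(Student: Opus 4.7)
The natural approach is a Lyapunov--Schmidt reduction combined with the real-analytic implicit function theorem. Hypotheses \ref{cr:fred} and \ref{cr:ker}, together with $\mathrm{ind}\, L(\mu^*) = 0$, supply closed complements so that $X = \mathrm{span}(\xi^*) \oplus X_1$ and $Y = Y_1 \oplus \ran L(\mu^*)$ with $\dim Y_1 = 1$; let $Q : Y \to Y_1$ be the associated continuous projection. Writing $x = t\xi^* + y$ with $t \in \R$ and $y \in X_1$, the equation $F(x,\mu) = 0$ is equivalent to the pair
\begin{align*}
    (I - Q) F(t\xi^* + y, \mu) &= 0,\\
    Q F(t\xi^* + y, \mu) &= 0.
\end{align*}

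For the first (range) equation, the derivative with respect to $y$ at $(t,y,\mu) = (0,0,\mu^*)$ is $(I - Q) L(\mu^*)|_{X_1}$, an isomorphism from $X_1$ onto $\ran L(\mu^*)$. The real-analytic implicit function theorem (as in Section~4.5 of \cite{bt:analytic}) yields a real-analytic $y = y(t,\mu)$ defined near $(0,\mu^*)$, with $y(0,\mu) \equiv 0$ forced by $F(0,\mu) \equiv 0$.

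Substituting into the second equation produces a real-analytic scalar bifurcation function $\phi(t,\mu) \colonequals Q F(t\xi^* + y(t,\mu),\mu) \in Y_1 \cong \R$. Since $\phi(0,\mu) \equiv 0$, a Hadamard-type factorization
\begin{align*}
    \phi(t,\mu) = t\, \psi(t,\mu), \qquad \psi(t,\mu) \colonequals \int_0^1 \partial_t \phi(st,\mu)\, ds
\end{align*}
defines a real-analytic $\psi$. A direct calculation using $\xi^* \in \ker L(\mu^*)$ gives $\psi(0,\mu^*) = 0$ and $\partial_\mu \psi(0,\mu^*) = Q\, \partial_\mu L(\mu^*) \xi^*$, which is nonzero in $Y_1$ precisely by the transversality hypothesis \ref{cr:trans}. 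A second application of the implicit function theorem then furnishes a unique real-analytic $\tilde\mu(t)$ near $t = 0$ with $\psi(t,\tilde\mu(t)) = 0$ and $\tilde\mu(0) = \mu^*$, and setting $\tilde x(\varepsilon) \colonequals \varepsilon \xi^* + y(\varepsilon,\tilde\mu(\varepsilon))$ produces the branch for conclusions (i) and (ii); in particular $\tilde x_\varepsilon(0) = \xi^*$ follows from $y(0,\mu) \equiv 0$.

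The principal obstacle is conceptual rather than technical: without the division $\phi = t\psi$, the bifurcation equation has the trivial family of zeros $(0,\mu)$ and the implicit function theorem extracts no information, so pulling out a factor of $t$ is what isolates the bifurcating branch. The other delicate point is the local uniqueness claim (iii): an arbitrary zero of $F$ in a small neighborhood of $(0,\mu^*)$ decomposes uniquely as $x = t\xi^* + y$, the range equation uniquely determines $y = y(t,\mu)$, and the bifurcation equation factors as $t\psi(t,\mu) = 0$, so non-triviality $x \ne 0$ rules out $t = 0$ and forces $\mu = \tilde\mu(t)$ by the second implicit function argument. One must additionally verify that the real-analytic implicit function theorem genuinely applies at each stage in the Banach-space setting and that the Hadamard integral preserves real-analyticity, both of which are standard consequences of composition results for analytic maps between Banach spaces.
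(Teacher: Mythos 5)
Your Lyapunov--Schmidt argument is correct, but note that the paper does not prove this statement at all: it is quoted verbatim from Theorem~8.3.1 of \cite{bt:analytic}, and your reduction (splitting off the range equation via the analytic implicit function theorem, factoring the scalar bifurcation function as $t\psi(t,\mu)$, and solving $\psi=0$ for $\mu$ using transversality) is essentially the standard proof given in that reference. The only detail you gloss over is that $\tilde x_\varepsilon(0)=\xi^*$ also requires $\partial_t y(0,\mu^*)=0$, which follows by differentiating the range equation in $t$ and using $L(\mu^*)\xi^*=0$ together with the injectivity of $(I-Q)L(\mu^*)|_{X_1}$ --- the same computation you already need for $\psi(0,\mu^*)=0$.
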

    
We will verify the hypotheses of Theorem~\ref{thm:cr} in a series of
lemmas. First, we state a basic result about the second component
of the operator $\lin(\mu)$.
\begin{lemma}\label{lem:schwarz}
  Suppose that $g \in C^{2+\alpha}(\partial \D)$ has the symmetries
  \eqref{eqn:mfold:v}. Then the problem
  \begin{align}\label{eqn:schwarz}
    \begin{alignedat}{2}
      \partial_1 w_1 - \partial_2 w_2 &= 0 &\quad& \ina \D,\\
      \partial_2 w_1 + \partial_1 w_2 &= 0 &\quad& \ina \D,\\
      x \cdot  w &= g &\quad& \ona \partial\D
    \end{alignedat}
  \end{align}
  has a solution $\smash{w \in C^{2+\alpha}(\overline \D,\R^2)}$ satisfying
  \eqref{eqn:mfold:w} and \eqref{eqn:killscaling} if and only if $g$
  has mean zero, and in this case the solution is unique.
  \begin{proof}
    Throughout the proof we identify $\R^2$ with $\C$ whenever
    convenient. Given $g$ as in the statement, let $G \in
    C^{2+\alpha}(\overline \D,\C)$ be the unique solution to the Schwarz
    boundary value problem
    \begin{align}\label{eqn:schwarztilde}
      \begin{alignedat}{2}
        \partial_1G_1 - \partial_2 G_2 &= 0 &\quad& \ina \D,\\
        \partial_2G_1 + \partial_1 G_2 &= 0 &\quad& \ina \D,\\
        G_1 = \Re G &= g &\quad& \ona \partial\D
      \end{alignedat}
    \end{align}
    with $\Im G(0) = 0$. By uniqueness and the symmetry of $g$, we
    deduce that $G$ has the symmetry \eqref{eqn:mfold:v}. If $g$ has
    zero mean, then the mean value property for the harmonic function
    $\Re G$ implies that $\Re G(0) = 0$ and hence $G(0)=0$. It is now
    straightforward to check that $w(z) \colonequals zG(z)$ solves
    \eqref{eqn:schwarz} and satisfies \eqref{eqn:mfold:w} and
    \eqref{eqn:killscaling}.
    Conversely, if $w$ satisfies \eqref{eqn:schwarz} as well as
    \eqref{eqn:mfold:w} and \eqref{eqn:killscaling}, then
    $G(z) \colonequals w(z)/z$ is a well-defined holomorphic function on $\D$
    which solves \eqref{eqn:schwarztilde} with $\Im G(0)=0$.
    Uniqueness for \eqref{eqn:schwarztilde} now gives uniqueness for
    $w$, while the mean value property for $\Re G$ and $\Re G(0) = \partial_1
    w_1(0) = 0$ together imply that $g$ has zero mean.
  \end{proof}
\end{lemma}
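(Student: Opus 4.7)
My plan is to identify $\R^2$ with $\C$ throughout and reduce the problem to the classical Schwarz problem on the disk. The key observation is that \eqref{eqn:mfold:w} forces $w(0)=0$ (set $z=0$ in the rotation identity, noting $m\ge 2$), so $G(z) \colonequals w(z)/z$ extends to a holomorphic function on $\D$ of class $C^{2+\alpha}(\overline\D,\C)$. Conversely, any holomorphic $G$ yields a holomorphic $w(z)=zG(z)$ satisfying the first rotation identity in \eqref{eqn:mfold:w} automatically, provided that $G$ itself is invariant under $z \mapsto e^{2\pi i/m}z$.

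Next, I would rewrite the boundary condition in terms of $G$. Viewing $x \cdot w$ as the real part of the Hermitian pairing $\bar z\, w(z)$, we have $x\cdot w = \Re(\bar z w(z)) = \Re(|z|^2 G(z)) = \Re G(z)$ on $\partial \D$. So \eqref{eqn:schwarz} is equivalent to the Schwarz problem
\begin{align*}
  G \text{ holomorphic in } \D, \qquad \Re G = g \ona \partial\D.
\end{align*}
The symmetries \eqref{eqn:mfold:w} for $w$ translate, via the relation $w(z)=zG(z)$, into the conditions that $G(e^{2\pi i/m}z)=G(z)$ and $G(\bar z)=\overline{G(z)}$, which mirror exactly the assumed symmetries \eqref{eqn:mfold:v} of $g$. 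Finally, \eqref{eqn:killscaling} becomes $\Re G(0)=0$, since $w'(0)=G(0)$.

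The core analytical input is that the Schwarz problem admits a unique holomorphic solution $G \in C^{2+\alpha}(\overline \D,\C)$ modulo an imaginary additive constant, and that solution inherits the symmetries of $g$ by the uniqueness statement (applied to $G(e^{2\pi i/m}z)$ and $\overline{G(\bar z)}$, each of which solves the same boundary value problem). To fix the imaginary constant I would impose $\Im G(0)=0$, which combined with the reflection symmetry $G(\bar z)=\overline{G(z)}$ makes $G(0)$ real. Then by the mean value property, $\Re G(0)$ equals the average of $g$ over $\partial\D$, so the normalization \eqref{eqn:killscaling} holds if and only if $g$ has mean zero, in which case $G(0)=0$ and $w(z)=zG(z)$ is a well-defined element of $C^{2+\alpha}(\overline\D,\C)$ solving \eqref{eqn:schwarz} with all required symmetries.

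Uniqueness then follows by running the argument in reverse: any admissible $w$ yields $G=w/z$ satisfying the Schwarz problem with $\Im G(0)=0$ (this last equation coming from the reflection symmetry together with $\Re G(0)=0$), and the Schwarz problem has at most one such solution. No step is really an obstacle; the only care needed is bookkeeping the interplay between the three symmetry/normalization conditions and the one-dimensional ambiguity inherent to the Schwarz problem, which is resolved precisely by the mean-zero compatibility condition.
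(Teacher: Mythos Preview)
Your proof is correct and follows essentially the same route as the paper: both reduce \eqref{eqn:schwarz} to the classical Schwarz problem for $G(z)=w(z)/z$, use uniqueness of that problem (with the normalization $\Im G(0)=0$) to transfer the symmetries and obtain uniqueness, and identify the mean-zero condition on $g$ with $\Re G(0)=0$ via the mean value property. The only minor imprecision is in your last parenthetical: $\Im G(0)=0$ follows from the reflection symmetry $G(\bar z)=\overline{G(z)}$ alone, not in conjunction with $\Re G(0)=0$.
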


% not a great page break here currently...

\begin{lemma}\label{lem:index}
  For any $\mu \in I$, the operator $\lin(\mu)$ is Fredholm with index 0.
  \begin{proof}
    As $\lin(\mu)$ is a compact perturbation of the 
    operator
    \begin{align}
      \label{eqn:tildelindef}
      \tilde\lin(\mu) \maps 
      \begin{pmatrix}
         v\\ w \\  \gamma
      \end{pmatrix}
      \mapsto 
      \begin{pmatrix}
        \Delta  v - v \\
        v + U_r\, x \cdot  w \\
        U_r v_r
      \end{pmatrix},
    \end{align}
    it suffices to show that $\tilde \lin(\mu)$ is Fredholm with index 0.
    To this end, let $f = (f_1,f_2,f_3) \in \Y$. The first and third components of the
    equation 
    \begin{align}
      \label{eqn:tildelin}
      \tilde \lin(\mu) (v,w,\gamma) = (f_1,f_2,f_3)
    \end{align}
    form an
    inhomogeneous Neumann problem
    \begin{align*}
      % \label{eqn:neumann}
      \begin{alignedat}{2}
        \Delta  v - v &= f_1  &\quad& \ina \D,\\
        v_r &= U_r^{-1} f_3 &\quad& \ona \partial\D. 
      \end{alignedat}
    \end{align*}
    This problem has a unique solution $v = \tilde v(f_1,f_3) \in
    C^{2+\alpha}(\overline\D)$, and uniqueness together with the
    symmetries of the data $f_1,f_3$ in the definition of $\Y$ forces
    $v$ to satisfy \eqref{eqn:mfold:v}.  

    It remains to consider the second component of
    \eqref{eqn:tildelin}, which rearranges to
    \begin{align*}
      x \cdot  w = U_r^{-1}(f_2 - \tilde v(f_1,f_3)) \qquad \ona \partial \D.
    \end{align*}
    Applying Lemma~\ref{lem:schwarz}, we deduce that
    \eqref{eqn:tildelin} has a solution if and only if the
    solvability condition
    \begin{align*}
      \int_{\partial\D} (f_2 - \tilde v(f_1,f_3))\, ds = 0
    \end{align*}
    holds, and that in this case the solution is unique. Since the
    parameter $\gamma \in \R$ does not appear on the right hand side of
    \eqref{eqn:tildelindef}, we conclude that $\tilde \lin(\mu)$ has index
    zero and the proof is complete.
  \end{proof}
\end{lemma}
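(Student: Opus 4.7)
The plan is to realize $\lin(\mu)$ as a compact perturbation of a simpler operator $\tilde\lin(\mu)$ whose Fredholm properties can be computed by hand. Concretely, I would introduce
\begin{align*}
  \tilde\lin(\mu) \begin{pmatrix} v \\ w \\ \gamma \end{pmatrix}
  \colonequals
  \begin{pmatrix}
    \Delta v - v \\
    v + U_r\, x \cdot w \\
    U_r v_r
  \end{pmatrix},
\end{align*}
obtained from $\lin(\mu)$ by keeping only the top-order action in $v$ while retaining the coupling $v + U_r\,x\cdot w$ in the middle component. The difference $\lin(\mu)-\tilde\lin(\mu)$ consists of a bulk term $(\mu^2+1)v$, a boundary term $-U_{rr}v$, and a finite-rank term $U_r\gamma$; the first two factor through the compact embeddings $C^{2+\alpha}(\overline\D) \hookrightarrow C^\alpha(\overline\D)$ and $C^{2+\alpha}(\overline\D) \to C^{1+\alpha}(\partial\D)$, and the $\gamma$-contribution is finite rank. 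Thus $\lin(\mu)-\tilde\lin(\mu)\maps \X\to\Y$ is compact, and since Fredholmness and the index are preserved under such perturbations, it suffices to analyze $\tilde\lin(\mu)$.

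To invert $\tilde\lin(\mu)$ I would proceed triangularly. Given $(f_1,f_2,f_3)\in\Y$, the first and third components together form the inhomogeneous Neumann problem $\Delta v - v = f_1$ in $\D$ with $v_r = U_r(1;\mu)^{-1} f_3$ on $\partial\D$; since $U_r(1;\mu)$ is a nonzero constant (cf.\ Remark~\ref{rk:bessel}) and $-\Delta+1$ with Neumann data is coercive, Schauder theory yields a unique $v\in C^{2+\alpha}(\overline\D)$, which by uniqueness inherits the symmetry \eqref{eqn:mfold:v} from that of the data. The middle equation then reduces to $x\cdot w = U_r(1;\mu)^{-1}(f_2 - v)$ on $\partial\D$, and Lemma~\ref{lem:schwarz} produces a unique admissible $w$ precisely when the right-hand side has mean zero. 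Meanwhile $\gamma$ does not enter $\tilde\lin(\mu)$ at all, so $(0,0,\gamma)$ lies in the kernel for every $\gamma\in\R$, while the single mean-zero obstruction provides a one-dimensional cokernel. Hence $\tilde\lin(\mu)$ has one-dimensional kernel and cokernel, i.e.\ is Fredholm of index zero.

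The main, and relatively mild, obstacle is bookkeeping to keep each step inside the symmetric subspaces defining $\X$ and $\Y$: both the Neumann solution $v$ and the Schwarz-type solution $w$ must lie in the correct $m$-fold and reflective symmetry class, which in each case follows by applying uniqueness to the symmetry-invariant differential operators acting on symmetric data. With this in hand, invariance of the Fredholm index under compact perturbation transfers the conclusion from $\tilde\lin(\mu)$ to $\lin(\mu)$.
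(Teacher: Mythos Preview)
Your proposal is correct and follows essentially the same route as the paper: the same auxiliary operator $\tilde\lin(\mu)$, the same Neumann problem for $v$, the same appeal to Lemma~\ref{lem:schwarz} for $w$, and the same one-dimensional kernel/cokernel count coming from the free $\gamma$ and the mean-zero obstruction. Your additional remarks spelling out why each piece of $\lin(\mu)-\tilde\lin(\mu)$ is compact are a welcome elaboration but do not change the strategy.
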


\begin{lemma}\label{lem:ker}
  Let $\mu_m$ be as in Lemma~\ref{lem:bessel}. Then
  the kernel $\lin(\mu_m)$ is one-dimensional, spanned by
  $(V_m(\blank;\mu_m),W_m(\blank;\mu_m),0)$ where $V_k,W_k$ are given by the
  formulas
  \begin{align}
    \label{eqn:VW}
    \begin{aligned}
      V_k(re^{i\theta};\mu) &\colonequals J_k(\mu r) \cos k\theta, \\
      W_k(re^{i\theta};\mu) &\colonequals \frac{J_k(\mu)J_0(\mu)}{\mu J_1(\mu)} r^{k+1}
      \begin{pmatrix}
        \cos((k+1)\theta) \\
        \sin((k+1)\theta)
      \end{pmatrix}.
    \end{aligned}
  \end{align}
  \begin{proof}
    Let $\mu \in I$ and suppose that $(v,w,\gamma) \in \X$ lies in the kernel of $\lin(\mu)$, i.e.
    \begin{subequations}\label{eqn:ker}
      \begin{alignat}{2}
        \label{eqn:ker:bulk}
        \Delta v +  \mu^2v &= 0  &\qquad& \ina \D, \\
        \label{eqn:ker:kin}
        v + U_r\, x \cdot w &= 0 &\qquad& \ona \partial\D, \\
        \label{eqn:ker:dyn}
        U_r v_r - U_{rr} v + U_r \gamma &= 0 &\qquad& \ona \partial \D.
      \end{alignat}
    \end{subequations}
    If $v \equiv 0$, then Lemma~\ref{lem:schwarz} and \eqref{eqn:ker:dyn}
    immediately imply that $w\equiv 0$ and $\gamma=0$ as well. 
   
    Suppose then that $v \not \equiv 0$. From \eqref{eqn:ker:bulk} and
    familiar Fourier series arguments, it is enough to consider the case
    where $v=V_k$ for some $k \ge 0$ which is an integer multiple of
    $m$. If $k=0$, then $v = J_0(\mu_m) < 0$ on $\partial\D$, and so
    \eqref{eqn:ker:kin} violates the solvability condition in
    Lemma~\ref{lem:schwarz}. Thus we can restrict our attention to $k \ge
    1$. By Lemma~\ref{lem:schwarz}, the boundary condition
    \eqref{eqn:ker:kin} can then be uniquely solved for $w$, and an
    easy calculation confirms that this solution is $w=W_k$. Averaging
    \eqref{eqn:ker:dyn} yields $\gamma = 0$.

    Substituting $v=V_k$, $w=W_k$, and $\gamma=0$ in
    \eqref{eqn:ker:dyn} and recalling \eqref{eqn:triv}, we are finally
    left with
    \begin{align*}
      % \label{eqn:algebraic}
      U_r V_r - U_{rr} V
      &=
      -\frac {\mu^2}{J_0(\mu)}
      \wronsk_{1,k}(\mu)
      \cos k\theta = 0
      \quad \ona \partial\D,
    \end{align*}
    where $\wronsk_{1,k} = J_1J_k'  - J_k J_1'$ is the Wronskian from
    Lemma~\ref{lem:bessel}. Setting $\mu=\mu_m$, we have from
    the same lemma that $\wronsk_{1,m}(\mu_m) = 0$, and moreover
    that $\wronsk_{1,k}(\mu_m) < 0$ for all integers $k > m$. The proof
    is complete.
  \end{proof}
\end{lemma}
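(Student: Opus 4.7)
The linearized system \eqref{eqn:ker} has a useful triangular structure: \eqref{eqn:ker:bulk} constrains $v$ alone, \eqref{eqn:ker:kin} determines $w$ from $v$ via Lemma~\ref{lem:schwarz}, and \eqref{eqn:ker:dyn} is a scalar relation between $v$ and $\gamma$ (with $w$ already eliminated by \eqref{eqn:newform:kin}). The plan is to exploit this by expanding $v$ in a Fourier series in $\theta$ and solving mode by mode.

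First I would attack \eqref{eqn:ker:bulk} by separation of variables. Regularity at the origin restricts the expansion to Bessel functions of the first kind, and the symmetries \eqref{eqn:mfold:v} (reflection about the real axis and invariance under rotations by $2\pi/m$) cut the expansion down to a sum of the modes $V_k(\blank;\mu)$ from \eqref{eqn:VW} with $k$ a non-negative integer multiple of $m$. By linearity of \eqref{eqn:ker} it suffices to analyse a single frequency $k$ at a time.

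For the zero mode $k=0$, the boundary datum $-J_0(\mu)/U_r(1;\mu)$ appearing in \eqref{eqn:ker:kin} is a nonzero constant, so its mean on $\partial\D$ is nonzero and Lemma~\ref{lem:schwarz} forbids any compatible $w$; this mode is therefore excluded. For $k \ge m$, the datum $-V_k/U_r$ automatically has zero mean, Lemma~\ref{lem:schwarz} produces a unique $w$, and tracing through its proof (writing $w(z)=zG(z)$ with $G$ holomorphic and $\Re G = -V_k/U_r$ on $\partial\D$) identifies this $w$ as precisely $W_k(\blank;\mu)$ from \eqref{eqn:VW} after using $U_r(1;\mu) = -\mu J_1(\mu)/J_0(\mu)$. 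Averaging \eqref{eqn:ker:dyn} over $\partial\D$ and noting that $V_k$ and $(V_k)_r$ have zero $\theta$-average for every $k \ge 1$ then forces $\gamma = 0$.

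The remaining step is to determine for which $\mu$ the triple $(V_k,W_k,0)$ actually satisfies \eqref{eqn:ker:dyn} pointwise. Substituting the formulas from \eqref{eqn:VW} and using the Bessel equation \eqref{eqn:besselode} to rewrite $U_{rr}(1)$ in terms of $U_r(1)$ and $U(1)$, a direct computation should collapse the left-hand side of \eqref{eqn:ker:dyn} to a nonzero prefactor times $\wronsk_{1,k}(\mu)\cos k\theta$. Setting $\mu=\mu_m$, Lemma~\ref{lem:bessel} gives the desired vanishing for $k=m$, producing the spanning element $(V_m,W_m,0)$. I expect the main obstacle to be ruling out the higher multiples $k = 2m,3m,\dots$: knowing only that $\mu_k$ is the \emph{smallest} root of $\wronsk_{1,k}$ is not enough, since $\mu_m > \mu_k$ and $\wronsk_{1,k}$ could in principle have a further zero at $\mu_m$. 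What makes the conclusion available is the strict monotonicity of $\mu_k$ in $k$ together with the sharp location $\mu_m \in I = (j_{1,1},j_{0,2})$ in Lemma~\ref{lem:bessel}; combining these with a sign analysis of $\wronsk_{1,k}$ on $I$ (the content of Section~\ref{sec:bessel}) should give $\wronsk_{1,k}(\mu_m) < 0$ for every integer $k > m$, finishing the proof.
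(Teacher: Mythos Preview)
Your proposal is correct and follows essentially the same route as the paper: separate variables in \eqref{eqn:ker:bulk} to reduce to the modes $V_k$ with $k$ a multiple of $m$, eliminate $k=0$ via the mean-zero condition in Lemma~\ref{lem:schwarz}, recover $w=W_k$ and $\gamma=0$ for $k\ge m$, and then reduce \eqref{eqn:ker:dyn} to the Wronskian condition $\wronsk_{1,k}(\mu_m)=0$, which Lemma~\ref{lem:bessel} (together with the monotonicity argument in Section~\ref{sec:bessel}) shows holds only for $k=m$. The only cosmetic difference is that the paper computes $U_{rr}(1)=-\mu^2 J_1'(\mu)/J_0(\mu)$ directly from $J_0'=-J_1$ rather than via the Bessel equation, but this leads to the same Wronskian expression.
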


\begin{lemma}\label{lem:transverse}  
  There are no solutions $(v,w,\gamma) \in \X$ to the equation
  \begin{align}
    \label{eqn:nontrans}
    \lin(\mu_m) (v,w,\gamma) + \partial_\mu \lin(\mu_m) (V_m(\blank;\mu_m),W_m(\blank;\mu_m),0) = 0.
  \end{align}
  % where here $\lin_\mu$ denotes the derivative of $\lin(\mu)$ with respect
  % to $\mu$.
  \begin{proof}
    To simplify the notation, we suppress the first
    arguments of the functions $V_m,W_m$ defined in \eqref{eqn:VW}.
    We have seen in the proof of Lemma~\ref{lem:ker} that, for any
    $\mu \in I$, $(V_m(\mu),W_m(\mu),0)$ satisfies
    \begin{align*}
      \lin(\mu) 
      \begin{pmatrix}
        V_m(\mu) \\ W_m(\mu) \\ 0 
      \end{pmatrix}
      =
      -\frac {\mu^2}{J_0(\mu)} \wronsk_{1,m}(\mu)
      \begin{pmatrix}
        0 \\ 0 \\ 
        \cos m\theta
      \end{pmatrix}.
    \end{align*}
    Differentiating this identity with respect to $\mu$ and using the fact that
    $\wronsk_{1,m}(\mu_m)=0$, we find 
    \begin{align}
      \label{eqn:aha}
      \lin(\mu) 
      \begin{pmatrix}
        \partial_\mu V_m(\mu_m) \\ \partial_\mu W_m(\mu_m) \\ 0 
      \end{pmatrix}
      +
      \partial_\mu \lin(\mu_m) 
      \begin{pmatrix}
        V_{m}(\mu_m) \\ W_{m}(\mu_m) \\ 0 
      \end{pmatrix}
      =
      \frac {\mu_m^2}{J_0(\mu_m)} \wronsk_{1,m}'(\mu_m)
      \begin{pmatrix}
        0 \\ 0 \\ 
        \cos m\theta
      \end{pmatrix}.
    \end{align}
    Suppose now that $(v,w,\gamma) \in \X$ solves \eqref{eqn:nontrans}.
    Subtracting \eqref{eqn:aha}, we find
    \begin{align}
      \label{eqn:nosolns}
      \lin(\mu) 
      \begin{pmatrix}
        v-\partial_\mu V_m(\mu_m) \\ w-\partial_\mu W_m(\mu_m) \\ \gamma
      \end{pmatrix}
      =
      -\frac {\mu_m^2}{J_0(\mu_m)} \wronsk_{1,m}'(\mu_m)
      \begin{pmatrix}
        0 \\ 0 \\ 
        \cos m\theta
      \end{pmatrix}.
    \end{align}
    By Lemma~\ref{lem:bessel}, $\wronsk_{1,m}'(\mu_m) \ne 0$. 
    Arguing exactly as in the proof of Lemma~\ref{lem:ker}, we
    conclude that \eqref{eqn:nosolns} has no solutions, and hence that
    the same is true for \eqref{eqn:nontrans}.
  \end{proof}
\end{lemma}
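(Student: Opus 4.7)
The plan is to exploit the one-parameter family of near-kernel elements $(V_m(\cdot;\mu), W_m(\cdot;\mu), 0)$ and to reduce the transversality condition to the simplicity of $\mu_m$ as a root of $\wronsk_{1,m}$, which is guaranteed by Lemma~\ref{lem:bessel}. The overall strategy is a standard one: differentiate an identity parameterized by $\mu$ at $\mu = \mu_m$, then argue by contradiction using the mode analysis that already appeared in Lemma~\ref{lem:ker}.

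First, I would observe that the computation performed at $\mu = \mu_m$ in the proof of Lemma~\ref{lem:ker} goes through for every $\mu \in I$, giving the identity
\begin{align*}
  \lin(\mu)\bigl(V_m(\blank;\mu),\, W_m(\blank;\mu),\, 0\bigr)
  = -\frac{\mu^2}{J_0(\mu)}\, \wronsk_{1,m}(\mu)\, (0,\,0,\,\cos m\theta).
\end{align*}
Since $V_m$ and $W_m$ depend smoothly (indeed analytically) on $\mu$, differentiating this identity at $\mu = \mu_m$ and using $\wronsk_{1,m}(\mu_m)=0$ from Lemma~\ref{lem:bessel} produces
\begin{align*}
  \lin(\mu_m)\bigl(\partial_\mu V_m(\mu_m),\, \partial_\mu W_m(\mu_m),\, 0\bigr)
  + \partial_\mu \lin(\mu_m)\bigl(V_m(\mu_m),\, W_m(\mu_m),\, 0\bigr)
  = \frac{\mu_m^2}{J_0(\mu_m)}\, \wronsk_{1,m}'(\mu_m)\, (0,\,0,\,\cos m\theta).
\end{align*}

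Next, suppose for contradiction that $(v,w,\gamma) \in \X$ satisfies \eqref{eqn:nontrans}. Subtracting the differentiated identity from \eqref{eqn:nontrans} yields an element $(v',w',\gamma) \in \X$ with
\begin{align*}
  \lin(\mu_m)(v',w',\gamma)
  = -\frac{\mu_m^2}{J_0(\mu_m)}\, \wronsk_{1,m}'(\mu_m)\, (0,\,0,\,\cos m\theta),
\end{align*}
and by Lemma~\ref{lem:bessel} the right-hand side is nonzero, because $\mu_m$ is a \emph{simple} root of $\wronsk_{1,m}$ and $J_0(\mu_m) \ne 0$. It therefore suffices to show that no $(v',w',\gamma) \in \X$ can produce a nonzero forcing of the form $(0,0,C\cos m\theta)$ under $\lin(\mu_m)$. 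This is essentially the same analysis as in Lemma~\ref{lem:ker}: expand $v'$ in Fourier modes consistent with the $m$-fold symmetry, use the Helmholtz equation to write each mode as $a_{k}\, J_{km}(\mu_m r)\cos(km\theta)$, recover $w'$ mode by mode from Lemma~\ref{lem:schwarz}, and substitute into the dynamic condition. The $m$-th mode then reads $-a_1\, \mu_m^2 \wronsk_{1,m}(\mu_m)/J_0(\mu_m) = C$, which forces $C=0$ because $\wronsk_{1,m}(\mu_m)=0$, a contradiction.

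The only nontrivial step is the differentiation of the identity, which needs the analytic dependence of $V_m$, $W_m$, and $\lin$ on $\mu$; but this is manifest from the explicit formulas \eqref{eqn:VW} and the definition of $\lin(\mu)$. Otherwise the argument is almost mechanical, and the real content is packaged into the simplicity statement of Lemma~\ref{lem:bessel}, which is where I expect any true difficulty to reside.
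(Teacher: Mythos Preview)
Your proposal is correct and follows essentially the same route as the paper: differentiate the identity $\lin(\mu)(V_m,W_m,0)=-\tfrac{\mu^2}{J_0(\mu)}\wronsk_{1,m}(\mu)(0,0,\cos m\theta)$ at $\mu=\mu_m$, subtract from \eqref{eqn:nontrans}, and then use the mode analysis of Lemma~\ref{lem:ker} together with $\wronsk_{1,m}(\mu_m)=0$ and $\wronsk_{1,m}'(\mu_m)\ne 0$ to derive a contradiction. The only difference is cosmetic: the paper simply cites ``arguing exactly as in the proof of Lemma~\ref{lem:ker}'' for the last step, whereas you spell out the Fourier projection onto the $m$-th mode explicitly.
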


\begin{proof}[Proof of Theorem~\ref{thm:main}]
  We apply Theorem~\ref{thm:cr} with $F=\F$, $X=\X$, $Y=\Y$, and
  $\mu^*=\mu_m$. The hypotheses \ref{cr:fred}, \ref{cr:ker},
  \ref{cr:trans} have been verified in Lemmas~\ref{lem:index},
  \ref{lem:ker}, and \ref{lem:transverse}, respectively. Thus there
  exists a family of solutions
  \begin{align*}
    \big(v(\varepsilon),w(\varepsilon),\gamma(\varepsilon),\mu(\varepsilon)\big) \in \X \by I
  \end{align*}
  to \eqref{eqn:newform}, parameterized by $\varepsilon \in (-\varepsilon_0,\varepsilon_0)$ and with the
  asymptotic expansions
  \begin{align*}
    v(\varepsilon) &= \varepsilon V_m(\mu_m) + O(\varepsilon^2), \\
    w(\varepsilon) &= \varepsilon W_m(\mu_m) + O(\varepsilon^2), \\
    \gamma(\varepsilon) &= O(\varepsilon^2), \\
    \mu(\varepsilon) &= \mu_m + O(\varepsilon) 
  \end{align*}
  as $\varepsilon \to 0$. Here for notational convenience we are suppressing the first arguments of
  $v,w,V_m,W_m$.
  The improved expansion 
  $\mu(\varepsilon) = \mu_m + O(\varepsilon^2)$
  for the parameter follows as usual from the symmetry of
  $\F$ under rotations by an angle $\pi/m$; see for instance
  \cite[discussion leading to (I.14.41)]{kielhofer:second}.
  % probably \cite[proof of Theorem~3.3]{patches} is better, but...
 
  These expansions in particular imply $\norm{w(\varepsilon)}_{C^{2+\alpha}} =
  O(\varepsilon)$ and so, perhaps after shrinking $\varepsilon_0 > 0$, the map
  $\phi(\varepsilon) = \mathrm{id} + w(\varepsilon)$ is a $C^{2+\alpha}$ diffeomorphism, and its
  image $\Omega(\varepsilon) \colonequals \phi(\D;\varepsilon)$ is a $C^{2+\alpha}$ domain. Thus, defining
  $u(\varepsilon) \in C^{2+\alpha}(\overline \D)$ using \eqref{eqn:vgamma}, i.e.
  \begin{align*}
    u(\varepsilon) \colonequals \tilde u(\varepsilon) \circ \phi(\varepsilon)^{-1}
    \quad 
    \text{where}
    \quad
    \tilde u(\varepsilon) \colonequals U(\mu(\varepsilon)) + v(\varepsilon) + \grad U(\mu(\varepsilon)) \cdot w(\varepsilon),
  \end{align*}
  and defining $c(\varepsilon) \colonequals U_r(1;\mu(\varepsilon)) - \gamma(\varepsilon)$ and $\lambda(\varepsilon) \colonequals (\mu(\varepsilon))^2$, we
  obtain the desired solutions of the original problem \eqref{eqn:og}
  with $b=1$. Here we must check, though, that we have chosen the
  correct sign for $c$, since in passing from \eqref{eqn:og} to
  \eqref{eqn:newform} we replaced \eqref{eqn:og:dyn} with
  \eqref{eqn:og:dyn2}. If necessary, further shrink $\varepsilon_0 > 0$ so that
  $c(\varepsilon) < 0$ for all $\varepsilon \in (-\varepsilon_0,\varepsilon_0)$. Inspecting the trivial solution
  at $\varepsilon = 0$, we see that the sign of $c(0)$ was indeed chosen
  correctly, and hence by a continuity argument that it is correct for
  all $\varepsilon \in (-\varepsilon_0,\varepsilon_0)$. 
 
  Finally, inserting the asymptotic expansions above into
  \eqref{eqn:vgamma} using the explicit formulas in
  \eqref{eqn:triv} and \eqref{eqn:VW} yields
  the claimed expansion
  \eqref{eqn:asym}, where here we have rescaled $\varepsilon$ by a factor of
  $J_m(\mu_m)J_0(\mu_m)/(\mu J_1(\mu_m))$ to simplify the form of \eqref{eqn:asym:F}.
\end{proof}

\section{Proof of Lemma~\ref{lem:bessel}}\label{sec:bessel}

This final section is devoted to the proof of Lemma~\ref{lem:bessel}.
First we introduce, for general integers $k,\ell \ge 0$, 
the Wronskians
\begin{align}
  \label{eqn:W}
  \wronsk_{k,\ell} := J_k J_\ell' - J_\ell J_k'.
\end{align}
As noted in \cite[Section~3.2]{palmai:interlacing}, an immediate consequence of
the differential equations \eqref{eqn:besselode} satisfied by
$J_k,J_\ell$ is that these Wronskians satisfy 
\begin{align}
  \label{eqn:dW}
  \frac d{d\mu} \big(\mu \wronsk_{k,\ell}(\mu)\big)  
  = \frac{\ell^2-k^2}\mu J_k(\mu)J_\ell(\mu).
\end{align}
This suggests that the
roots of $\wronsk_{k,\ell}$ are closely related to the
interlacing of the roots of $J_k,J_\ell$; see
\cite[Lemma~5]{palmai:interlacing}. Since Lemma~\ref{lem:bessel}
concerns the roots of $\wronsk_{1,m}$, we are therefore
particularly interested in understanding how the roots of
$J_1$ and $J_m$ interlace.

For integers $m \ge 0$, let $j_{m,1}<j_{m,2}<\cdots$ denote the positive
roots of $J_m$, all of which are simple. Importantly, $j_{m,n}$ is a
strictly increasing function of $m$ for each $n$. From the numerical
values $j_{3,1} \approx 6.3802$, $j_{1,2} \approx 7.0156$, and $j_{4,1} \approx 7.5883$,
we see that $j_{3,1} < j_{1,2} < j_{4,1}$, and so this monotonicity
implies
\begin{align}
  \label{eqn:besselroots}
  j_{1,2} < j_{m,1} \quad \text{for all } m \ge 4.
\end{align}
The inequality \eqref{eqn:besselroots} is the reason for the restriction $m\ge4$ in
Lemma~\ref{lem:bessel}. Indeed, the result is false for $m=0,1,2,3$.

\begin{figure}
  \includegraphics[scale=1.1]{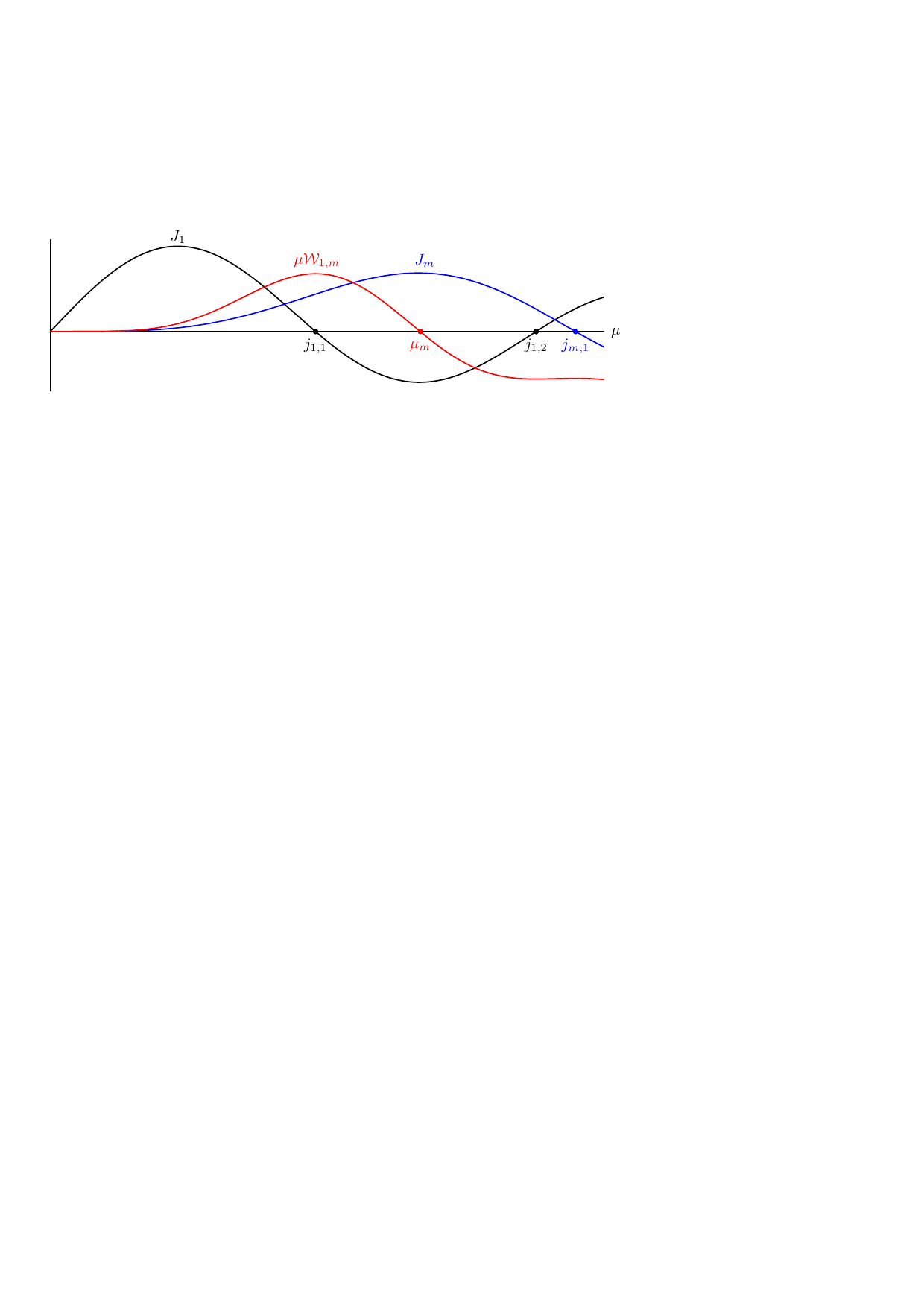} 
  \caption{
    Plots of $J_1(\mu),J_m(\mu),\mu\wronsk_{1,m}(\mu)$ for $m=4$. Their roots
    satisfy the inequalities $j_{1,1} < \mu_m < j_{1,2} < j_{m,1}$,
    and $\mu\wronsk_{1,m}$ is strictly decreasing on the interval
    $(j_{1,1},j_{1,2})$.
    \label{fig:wronsk}
  }
\end{figure}

\begin{proof}[Proof of Lemma~\ref{lem:bessel}]
  Fix an integer $m \ge 4$. The functions $J_1,J_m$ vanish at the origin
  and are positive for small positive arguments. This together with 
  \eqref{eqn:besselroots} gives the inequalities
  \begin{gather}
    \label{eqn:basicbessel}
    \begin{gathered}
      J_1 > 0 \ona (0,j_{1,1}),
      \quad 
      J_1 < 0 \ona (j_{1,1},j_{1,2}), 
      \quad 
      J_m > 0 \ona (0,j_{1,2}), \\
      J_1'(j_{1,1}) < 0 < J_1'(j_{1,2})
    \end{gathered}
  \end{gather}
  illustrated in Figure~\ref{fig:wronsk}.
  From the definition of $\wronsk_{1,m}$ we immediately conclude that
  \begin{align}
    \label{eqn:wronsk1}
    \wronsk_{1,m}(0) = 0,
    \quad
    \wronsk_{1,m}(j_{1,1}) > 0 > \wronsk_{1,m}(j_{1,2}),
  \end{align}
  while \eqref{eqn:dW} and \eqref{eqn:basicbessel} yield
  \begin{align}
    \label{eqn:wronsk2}
    (\mu\wronsk_{1,m})' > 0 \ona (0,j_{1,1}),
    \quad (\mu\wronsk_{1,m})' < 0 \ona (j_{1,1},j_{1,2}).
  \end{align}
  From \eqref{eqn:wronsk1} and \eqref{eqn:wronsk2} it is clear
  that $\wronsk_{1,m}$ has a root $\mu_m \in
  (j_{1,1},j_{1,2})$, that this root is simple, and that there are no
  other roots in the interval $(0,j_{1,2})$.

  Next we show that $\mu_k < \mu_m$ for $k > m \ge 4$. By
  \eqref{eqn:dW} and \eqref{eqn:basicbessel}, the function
  $\mu\wronsk_{k,m}$ is strictly decreasing on $(0,j_{1,2})$, so that in
  particular $\wronsk_{k,m}(\mu_m) < 0$. Using the algebraic
  identity
  \begin{align*}
    J_1 \wronsk_{k,m} - J_k \wronsk_{1,m} + J_m \wronsk_{1,k} = 0
  \end{align*}
  and the fact that $\wronsk_{1,m}(\mu_m)=0$, we therefore find 
  \begin{align*}
    \wronsk_{1,k}
     % = \frac{J_k \wronsk_{1,m} - J_1 \wronsk_{k,m}}{J_m}
     = -\frac{J_1}{J_m} \wronsk_{k,m} < 0
     \quad \ata \mu_m.
  \end{align*}
  Since \eqref{eqn:wronsk2} holds for $m=k$ and
  $\wronsk_{1,k}(\mu_k)=0$, we deduce that $\mu_k < \mu_m$ as desired.

  It remains to show that $\mu_m < j_{0,2}$. Since $j_{0,1} < j_{1,1} <
  j_{0,2} < j_{1,2}$, this will in particular imply that $J_0(\mu_m) <
  0$. As $\mu_m$ is a decreasing function of $m \ge 4$, it suffices to
  show $\mu_4 < j_{0,2}$, and arguing as above this is equivalent to the
  inequality $\wronsk_{1,4}(j_{0,2}) < 0$. Using a computer algebra
  system we numerically calculate $\wronsk_{1,4}(j_{0,2}) \approx -0.012148
  < 0$, and the proof is complete.
\end{proof}

\subsection*{Acknowledgments}
The author thanks Antonio J.~Fernández and David Ruiz for helpful
comments on an earlier draft of this paper.

\bibliographystyle{alpha}
\bibliography{master}

\end{document}